\documentclass{amsart}

\usepackage[T1]{fontenc}
\usepackage[utf8]{inputenc} 
\usepackage{lmodern}

\usepackage{amsmath,amssymb,amsfonts,mathtools}

\usepackage{tikz}

\usepackage{hyperref}

\newcommand{\R}{\mathbb{R}}

\newcommand{\Sph}{\mathbb{S}}
\newcommand{\B}{\mathbb{B}}

\newcommand{\width}{\operatorname{width}}
\newcommand{\rw}{\operatorname{rw}}

\theoremstyle{plain}
\newtheorem{theorem}{Theorem}
\newtheorem{lemma}[theorem]{Lemma}
\newtheorem{claim}[theorem]{Claim}
\newtheorem{conjecture}[theorem]{Conjecture}

\theoremstyle{definition}

\theoremstyle{remark}
\newtheorem*{remark}{Remark}

\title{A note on the~affine~plank~conjecture}

\date{}

\author
{Egor Bakaev}
\address{Department of Computer Science, University of Copenhagen}

\author
{Amir Yehudayoff}
\address{Department of Computer Science, University of Copenhagen,
	and Department of Mathematics, Technion-IIT}

\subjclass[2020]{Primary 52A40; Secondary 52C17}

\begin{document}
	
\begin{abstract}
	In 1951, Bang posed the \emph{affine plank conjecture}, which remains open: If a convex body in $\mathbb{R}^d$ is covered by planks, then the total relative width of the planks is at least one.
	We prove a lower bound of $2/(1+\sqrt{d})$ for this total relative width.
	The best previously known lower bound was $2/(1+d)$.
\end{abstract}

		\maketitle
		
\section*{Introduction}

In 1932, Tarski posed the following natural problem \cite{tarski1932uwagi}, which Bang solved in 1950 \cite{bang1950covering, bang1951solution}.
{\em Is it true that the total width of planks covering a convex body
is at least the width of the body?}

	A plank $P$ is a region between two parallel hyperplanes, i.e., a set of the form
	$
	P=\{x\in\mathbb{R}^d: |\langle x,u\rangle-t|\le \frac{w}{2}\}
	$ where $u\in \Sph^{d-1}$ is the normal, $t\in\mathbb{R}$ is the translation, and $\width (P) := w$.
Let $K \subset \R^d$ be a convex body (that is, a compact convex set with non-empty interior). 
The support function of $K$ is	
\[
	\R^d \ni x \mapsto h_K(x)=\max\{\langle x,y\rangle: y\in K\}.
	\]	
	The width of $K$ in direction $u \in \Sph^{d-1}$ is
	\[
	w_K(u)=h_K(u)+h_K(-u).
	\]
The width of $K$ is $\width(K) = \min_{u \in \Sph^{d-1}} w_K(u)$.

\begin{theorem}[Tarski's plank problem / Bang's theorem]
	\label{th:bang}
	If the planks $P_1, \dots, P_m \subset \R^d$ cover a convex body $K\subset\mathbb{R}^d$ then
	\[
	\sum_{i=1}^m \width (P_i) \ge \width(K).
	\]
\end{theorem}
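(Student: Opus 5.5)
We follow Bang's original strategy: argue by contradiction and construct a point of $K$ that escapes every plank. Suppose $\sum_i w_i < \width(K)$, where $w_i=\width(P_i)$, and write $P_i=\{x: |\langle x,u_i\rangle - t_i|\le w_i/2\}$. Fix a point $c\in K$ and consider the $2^m$ points
\[
x_\varepsilon = c + \sum_{i=1}^m \varepsilon_i \frac{w_i}{2} u_i, \qquad \varepsilon\in\{-1,1\}^m .
\]
The plan has two steps. First, some choice of signs $\varepsilon$ places $x_\varepsilon$ outside the interior of every plank. Second, with a careful choice of $c$, every $x_\varepsilon$ lies in $K$. Together these contradict the covering, up to a boundary issue handled below.

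For the first step we use Bang's lemma. Consider the Gram-type quadratic function
\[
F(\varepsilon)=\Bigl|\sum_i \varepsilon_i \tfrac{w_i}{2}u_i\Bigr|^2 - 2\sum_i \varepsilon_i \tfrac{w_i}{2}\bigl(t_i-\langle c,u_i\rangle\bigr).
\]
We choose $\varepsilon$ to maximize $F$ over $\{-1,1\}^m$. Comparing $F(\varepsilon)$ with $F$ at $\varepsilon$ with the $k$-th coordinate flipped gives
\[
\varepsilon_k\bigl(\langle x_\varepsilon,u_k\rangle - t_k\bigr)\ \ge\ \frac{w_k}{2}.
\]
This is a direct expansion, and the key cancellation is that the flip changes the quadratic term by exactly $4\varepsilon_k\frac{w_k}{2}\langle \sum_{j\ne k}\varepsilon_j \frac{w_j}{2}u_j,u_k\rangle$. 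So $x_\varepsilon$ lies outside the open plank $\operatorname{int}P_k$ for every $k$. To get strict escape we enlarge the planks slightly, replacing $w_i$ by $w_i+\delta$ while keeping $\sum(w_i+\delta)<\width(K)$. A point outside the enlarged open planks is then outside the original closed planks.

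The second step is the main obstacle: we need a $c\in K$ with $c+\sum_i \varepsilon_i \frac{w_i}{2}u_i\in K$ for all signs. Since $\sum_i \varepsilon_i\frac{w_i}{2}u_i$ is a point of the zonotope $Z=\sum_i [-\frac{w_i}{2}u_i,\frac{w_i}{2}u_i]$, it suffices to find a translate of $Z$ inside $K$. The natural route is through the inradius: if $\operatorname{dist}(c,\partial K)\ge r$, then $c+Z\subset K$ as long as $\sum_i w_i/2 \le r$. This only gives a width-to-inradius loss, which is too weak, since $r$ can be as small as about $\width(K)/(2\sqrt d)$.

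So we would instead use the sharper containment criterion. A zonotope $Z$, which is centrally symmetric, has a translate inside $K$ whenever $w_Z(u)\le w_K(u)$ for all $u$ and $Z$ is a sum of segments. We would prove this by induction on the number of segments, using the fact that a segment of length $\ell$ in direction $u$ fits into $K$ after replacing $K$ by $K\ominus[-\frac{\ell}{2}u,\frac{\ell}{2}u]$. The width of this eroded body in direction $v$ drops by at most $\ell|\langle u,v\rangle|\le \ell$. Thus after placing all segments, the remaining body has width at least $\width(K)-\sum_i w_i>0$ in every direction, so it is nonempty and any point of it serves as $c$. The delicate point is justifying that erosion by a segment reduces every directional width by at most $\ell$ and keeps the body nonempty. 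We would check this via the minimal width: it suffices that $K$ contains a translate of the segment, which holds because $w_K(u)\ge \width(K)>\ell$ and the diameter in direction $u$ is at least $w_K(u)$. We then argue that $K\ominus S$ is again a convex body of width at least $\width(K)-\ell$. If that monotonicity fails, we fall back on Bang's original approach, which chooses $c$ together with $\varepsilon$ inside $K$ via an extremal argument.
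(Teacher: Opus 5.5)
Your first step is correct: maximizing $F$ over $\{-1,1\}^m$ and comparing with a single sign flip does give $\varepsilon_k(\langle x_\varepsilon,u_k\rangle-t_k)\ge w_k/2$ (Bang's lemma), and the $\delta$-enlargement handles closed planks. The gap is the second step. You flag it as the main obstacle but do not close it, and the facts you propose to use there are false.

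\begin{itemize}
\item The criterion ``$w_Z\le w_K$ in all directions implies a translate of $Z$ lies in $K$'' fails. For a triangle $K$, the hexagon $\tfrac12(K-K)$ is a zonotope with the same directional widths as $K$, but its area is $\tfrac32$ that of $K$.
\item The erosion estimate runs the wrong way. Since $(K\ominus S)+S\subseteq K$, one has $h_{K\ominus S}\le h_K-h_S$, so directional widths drop by \emph{at least} $\ell|\langle u,v\rangle|$. For $K=\operatorname{conv}\{(0,0),(1,0),(0,1)\}$ and $S$ horizontal of length $\ell$, $K\ominus S$ is a translate of $(1-\ell)K$. The width in direction $(0,1)$ drops by $\ell$ although $\langle u,v\rangle=0$, and in direction $(1,-1)/\sqrt2$ it drops by $\sqrt2\,\ell>\ell$.
\item The longest chord of $K$ in direction $u$ is at most $w_K(u)$, not at least.
\item Knowing that $K$ contains a translate of $S$ only gives $K\ominus S\ne\emptyset$. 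It does not give the bound $\width(K\ominus S)\ge\width(K)-\ell$ that your induction needs.
\end{itemize}
The fallback to ``Bang's original approach'' is left unspecified.

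The paper only cites this theorem, but the missing ingredient is the convexity trick behind its Remark and Lemma~\ref{lemma:chord}. In every direction $u$, $K$ has a chord of length $\ell_K(u)\ge\width(K)$. Take chords $s_i\subseteq K$ parallel to $u_i$ of length $\ell_K(u_i)$, and set $\lambda_i=w_i/\ell_K(u_i)$, so that $\sum_i\lambda_i\le\sum_i w_i/\width(K)<1$. For any $p\in K$, convexity gives $(1-\sum_i\lambda_i)p+\sum_i\lambda_i s_i\subseteq K$. The left side is a translate of $Z$, because $\lambda_i s_i$ is a segment of length $w_i$ parallel to $u_i$. This produces $c$ directly (apply it to the enlarged widths $w_i+\delta$), and in fact proves Lemma~\ref{lemma:chord}.

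The same trick also rescues your induction. With $\lambda=\ell/\ell_K(u)$ and $s$ a maximal chord in direction $u$, the inclusion $(1-\lambda)K+\lambda s\subseteq K$ shows that $K\ominus S$ contains a translate of $(1-\lambda)K$. Hence $\width(K\ominus S)\ge(1-\lambda)\width(K)\ge\width(K)-\ell$.
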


In his 1951 paper, Bang also posed the following natural extension.
The relative width of the plank $P$ with respect to $K$ is
\[
	\rw_K(P) := \frac{\width(P)}{w_K(u)},
\]
where $u$ is the normal of $P$.
Unlike the width, the relative width is invariant under invertible affine maps.

	\begin{conjecture}[Bang's affine plank conjecture]
		\label{conj}
		If the planks $P_1,\dots,P_m \subset \R^d$ cover a convex body $K\subset\mathbb{R}^d$ then
		\[
		\sum_{i=1}^m \rw_K(P_i)
		 \ge 1.
		\]
	\end{conjecture}
	
	This conjecture remains open. In fact, it is open even in the plane (some sources incorrectly claim that the conjecture has been resolved in the plane, citing \cite{bang1953some}; however, that paper proves the conjecture only for the special case of two planks).
	In 1991, Ball proved the conjecture for symmetric convex bodies \cite{ball1991plank}. Ambrus showed \cite{ambrus2010appendix} that proving the conjecture for simplices implies it in full generality. For overviews of the related results, applications and problems, see \cite{bezdek2013tarski,brass2005research} and more recent  \cite{verreault2022survey,toth2023lagerungen}.

John's theorem and Bang's theorem imply that, in the setting of Conjecture~\ref{conj}, the sum of the relative widths is at least $\frac{1}{d}$ (see~\cite{chambers2016note}
and \cite{bezdek2009covering} for a different proof). 
Ball's solution of the symmetric case and a result of Minkowski and Radon
 (see, e.g. \cite[Corollary 1.4.2]{toth2015measures})
yield the better lower bound $\frac{2}{1+d}$ (see \cite[Theorem 2 and Remark 5]{chambers2016note}). Our result is Theorem \ref{th:two-over-sqrtd}, which gives the lower bound~$2/(1+\sqrt{d})$.

\subsection*{Acknowledgments}
We thank Alexander Polyanskii and Ansgar Freyer for helpful remarks. 
The authors are supported by the DNRF Chair grant 20231101-28697.
The authors are part of BARC, Basic Algorithms Research Copenhagen, supported by the VILLUM Foundation grant 54451.

\section*{The lower bound}

For the rest of this text, fix a convex body $K \subset \R^d$
and consider the centrally symmetric body 
\[
L:=\tfrac12(K-K).
\]
\begin{lemma} 
		For any $u \in \Sph^{d-1}$, we have $w_L(u)=w_K(u).$
\end{lemma}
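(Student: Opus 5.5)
The plan is to compute the support function of $L$ directly from that of $K$, using two standard facts about support functions. The first is additivity under Minkowski sums: for convex bodies $A,B$ and $x\in\R^d$,
\[
h_{A+B}(x)=h_A(x)+h_B(x),
\]
because maximizing $\langle x,a+b\rangle$ over $a\in A$, $b\in B$ splits into independent maximizations. The second is positive homogeneity in the body together with the reflection rule: $h_{\lambda A}(x)=\lambda h_A(x)$ for $\lambda\ge 0$, and $h_{-A}(x)=h_A(-x)$. Both follow immediately from the definition $h_K(x)=\max\{\langle x,y\rangle: y\in K\}$.

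Since $L=\tfrac12 K+\tfrac12(-K)$, these facts give, for every $x\in\R^d$,
\[
h_L(x)=\tfrac12 h_K(x)+\tfrac12 h_K(-x).
\]
Taking $x=u$ and $x=-u$ and adding, we get
\[
w_L(u)=h_L(u)+h_L(-u)=\tfrac12\bigl(h_K(u)+h_K(-u)\bigr)+\tfrac12\bigl(h_K(-u)+h_K(u)\bigr)=w_K(u).
\]

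No step here is a real obstacle. The only point needing care is the additivity $h_{A+B}=h_A+h_B$, where the maxima are attained by compactness. One should also note that $L$ is itself a convex body, being compact, convex and symmetric, with $0$ in its interior since $K$ has nonempty interior. This is not strictly needed for the width identity, but it justifies writing $w_L$ in the same sense as $w_K$.
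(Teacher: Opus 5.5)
Your proposal is correct and follows essentially the same route as the paper: additivity of support functions over the Minkowski sum $K+(-K)$ together with $h_{-K}(x)=h_K(-x)$. The only cosmetic difference is that the paper uses the central symmetry of $L$ to write $w_L(u)=2h_L(u)$, whereas you add the expressions for $h_L(u)$ and $h_L(-u)$ directly.
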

\begin{proof} 
$	w_L(u)=2h_L(u)=h_{K-K}(u) = h_K(u)+h_K(-u)=w_K(u).$
\end{proof} 

The problem is affine-invariant, so we may assume that $L$ is in John's position (see, for example, \cite[Lecture 3]{ball1997elementary}).
In other words, let $T$ be an invertible linear map so that $TL$ is in John's position,
and replace the body~$K$ and the planks $P_i$ by $TK$ and $TP_i$. 	
Now, for the unit ball $\B$ we have
\[
\B \subseteq L\subseteq \sqrt{d}\B.
\]
This already gives an improvement over the $2/(1+d)$ lower bound.

\begin{claim}
			If the planks $P_1,\dots,P_m \subset \R^d$ cover 
			a convex body $K\subset\mathbb{R}^d$ then
			\[
			\sum_{i=1}^m \rw_K(P_i) \ge \frac1{\sqrt d}.
			\]
\end{claim}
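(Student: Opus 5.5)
The plan is to combine Bang's theorem (Theorem~\ref{th:bang}) with the two-sided comparison $\B \subseteq L \subseteq \sqrt{d}\B$, using the preceding lemma to transfer width information from the symmetric body $L$ to $K$. We work in the normalization just fixed, with $TL$ in John's position, replacing $K$ and the $P_i$ by their images under $T$. Relative widths are invariant under invertible affine maps, and images of planks covering $K$ are planks covering $TK$. Hence it suffices to prove the claim for this normalized $K$.

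First, we get a lower bound on the absolute width of $K$. By the lemma, $w_K(u) = w_L(u) = 2h_L(u)$ for every $u \in \Sph^{d-1}$. Since $\B \subseteq L$, we have $h_L(u) \ge h_\B(u) = 1$, so $w_K(u) \ge 2$ for all $u$, and hence $\width(K) \ge 2$. Applying Theorem~\ref{th:bang} to the covering $P_1,\dots,P_m$ of $K$ gives
\[
\sum_{i=1}^m \width(P_i) \ge \width(K) \ge 2 .
\]

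Second, we get an upper bound on every directional width. Since $L \subseteq \sqrt{d}\B$, we have $h_L(u) \le \sqrt d$, so $w_K(u) = 2h_L(u) \le 2\sqrt d$ for every unit $u$. In particular, if $u_i$ is the normal of $P_i$, then
\[
\rw_K(P_i) = \frac{\width(P_i)}{w_K(u_i)} \ge \frac{\width(P_i)}{2\sqrt d}.
\]
Summing over $i$ and using the bound from Bang's theorem yields $\sum_i \rw_K(P_i) \ge 2/(2\sqrt d) = 1/\sqrt d$.

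There is no real obstacle here. The only point needing care is that both inclusions are used for $L$ rather than $K$, and they transfer to $K$ exactly because the lemma identifies the directional widths of $K$ and $L$. This is why the John position is taken for the symmetric body $L$ and not for $K$ itself: for a symmetric body John's theorem gives the ratio $\sqrt d$ instead of $d$.
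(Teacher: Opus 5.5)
Your proposal is correct and matches the paper's proof. Both use $w_K=w_L$ and the John-position inclusions $\B\subseteq L\subseteq\sqrt d\,\B$ to get $2\le w_K(u)\le 2\sqrt d$, then apply Bang's theorem to obtain $\sum_i \rw_K(P_i)\ge \width(K)/(2\sqrt d)\ge 1/\sqrt d$.
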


	\begin{proof}
Since $w_K(u) = w_L(u)$
for every $u \in \Sph^{d-1}$, it follows that
$2 \leq w_K(u) \leq 2 \sqrt{d}$.
By Bang's Theorem~\ref{th:bang},
		\[
		\sum_i \rw_K(P_i) 
		\ge \frac{\sum_i \width (P_i)}{2\sqrt d}
		\ge \frac{\width(K)}{2\sqrt d}
		\ge \frac{2}{2\sqrt d}.
		\qedhere
		\]
	\end{proof}

Next, we prove a stronger lower bound. It was observed (see, e.g., \cite{gardner1988relative}) that Bang's proof of Theorem \ref{th:bang} actually yields the stronger statement (Lemma \ref{lemma:chord} below). The proof exactly follows the proof of Lemma 2.3 in \cite{verreault2022survey}.

Denote by $[a,b]$ the line segment with the endpoints $a,b \in \R^d$.
Define the \textit{chord length} of a convex body $M \subset \R^d$ in direction $u \in \Sph^{d-1}$ as
\[
\ell_M(u) := \sup\{t\ge 0: \exists\  x,y \in M: x-y=tu\}.
\]

\begin{remark}
	It is easy to see that $\ell_M(u) \leq w_M(u)$.
	Moreover, a chord realizing $\ell_M(u)$ is an \textit{affine diameter}, that is,
	its endpoints lie in two parallel supporting hyperplanes of~$M$.
	Consequently, 
	$
	\width(M) \leq \ell_M(u).
	$
	For a proof see, e.g., Lemma~2.3 in~\cite{verreault2022survey}.
\end{remark}

 \begin{lemma}
 	\label{lemma:chord}
If the planks $P_1,\dots,P_m$ with unit normals $u_1,\dots, u_m$ cover 
the convex body $K\subset\R^d$, then
\[
\sum_{i=1}^m \frac{\width(P_i)}{\ell_K(u_i)} \ge 1.
\]
\end{lemma}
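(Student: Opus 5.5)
This is Bang's argument, carried out with chord lengths in place of widths. We argue by contradiction. Suppose $\sum_i \width(P_i)/\ell_K(u_i) < 1$. We will produce a point of $K$ lying outside every plank.

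Write $w_i=\width(P_i)$ and $P_i=\{x: |\langle x,u_i\rangle - t_i|\le w_i/2\}$. Fix a point $x_0\in K$ and set $v_i = \frac{w_i}{2\ell_K(u_i)}\, \ell_K(u_i) u_i$. The point of this normalization is that each $v_i$ is half of a maximal chord in direction $u_i$, scaled by the relative weight $\lambda_i:=w_i/\ell_K(u_i)$. Consider the $2^m$ points
\[
x_\varepsilon = x_0+\sum_i \varepsilon_i v_i,\qquad \varepsilon\in\{\pm1\}^m .
\]
Bang's lemma is purely linear algebra. For $\varepsilon$ maximizing $\bigl|\sum_j\varepsilon_j v_j + c\bigr|$-type quadratic expressions, it gives the following: for any reals $t_i$ there is a sign vector $\varepsilon$ such that
\[
\Bigl|\bigl\langle \textstyle\sum_j \varepsilon_j v_j, u_i\bigr\rangle - s_i\Bigr| \ge \|v_i\|\, \text{(suitably, } \ge \langle v_i,u_i\rangle\text{)}\quad\text{for all } i .
\]
Here the $s_i$ absorb $t_i-\langle x_0,u_i\rangle$. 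Since $\langle v_i,u_i\rangle = w_i/2$, this says $x_\varepsilon\notin \operatorname{int}P_i$ for every $i$. I would simply invoke Bang's lemma in this form, using the standard proof that maximizes $\bigl\|\sum\varepsilon_j v_j - \text{shift}\bigr\|^2$ over signs. To handle boundary issues and get strict separation, I would shrink the planks slightly or scale the $v_i$ up by a factor $1+\delta$, which is allowed because the assumed sum is strictly less than $1$.

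The main obstacle is showing that every $x_\varepsilon$ lies in $K$. In Bang's original proof the segments have length equal to the width, which fits in any direction. Here, the point is that $K$ contains a full chord $[a_i,a_i+\ell_K(u_i)u_i]$ in each direction $u_i$. I would pick the base point and realize $x_\varepsilon$ as a convex combination. Concretely, the sum of the segments $[-\tfrac{\ell_i}{2}u_i,\tfrac{\ell_i}{2}u_i]$ with weights $\lambda_i$, where $\sum\lambda_i\le 1$, gives a point
\[
\sum_i \lambda_i\Bigl(c_i+\varepsilon_i\tfrac{\ell_i}{2}u_i\Bigr) + \Bigl(1-\sum_i\lambda_i\Bigr)c_0 ,
\]
where $c_i$ is the midpoint of a maximal chord in direction $u_i$ and $c_0\in K$ is arbitrary. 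Each bracket is an endpoint of that chord, hence lies in $K$, so by convexity the whole combination lies in $K$. Taking $x_0=\sum_i\lambda_i c_i+(1-\sum\lambda_i)c_0$, this equals $x_\varepsilon$, since $\lambda_i\ell_i/2=w_i/2$. So every $x_\varepsilon\in K$. Together with Bang's lemma this gives a point of $K$ outside all the planks, which is the contradiction.

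Finally, the equality case: if the sum equals exactly $1$ the statement holds trivially. So only the strict inequality needs the argument above, and the perturbation there closes the gaps with the boundaries of the planks.
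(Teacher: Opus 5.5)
Your proposal is correct and follows the route the paper takes. The paper writes out no proof of its own; it observes that Bang's argument for Theorem~\ref{th:bang} (Bang's sign lemma applied to $v_i=\tfrac{w_i}{2}u_i$, plus writing each $x_\varepsilon$ as a convex combination of endpoints of maximal chords) works verbatim with $\ell_K(u_i)$ in place of $\width(K)$, which is what you do.

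Two small slips. First, the perturbation must enlarge the planks, not shrink them; scaling the $v_i$ by $1+\delta$, as your slack $\sum_i\lambda_i<1$ allows, is the right version. Second, the functional maximized in the proof of Bang's lemma is $\|\sum_j\varepsilon_jv_j\|^2-2\sum_i\varepsilon_i\|v_i\|s_i$, not a shifted norm; a shift only produces offsets of the form $s_i=\langle c,u_i\rangle$. This matters only if you prove the lemma rather than cite it.
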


The {\em radial function} of a centrally symmetric convex body $M\subset\R^d$ is
\[
\Sph^{d-1} \ni u \mapsto \rho_M(u) := \sup\{t\ge 0: tu\in M\}.
\]

\begin{lemma}
	\label{lem:chord-L}
	For every $u\in\Sph^{d-1}$, we have $\ell_K(u)=\ell_L(u)=2\rho_L(u).$
\end{lemma}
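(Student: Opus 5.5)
We prove the two equalities separately, both by unwinding the definition of the chord length as the largest $t$ such that $tu$ is a difference of two points of the body. Throughout we use that $K$ and $L$ are compact, so all suprema are attained, and that $L=\tfrac12(K-K)$ is centrally symmetric and convex.

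\emph{Step 1: $\ell_K(u)=2\rho_L(u)$.} By definition, $t u = x-y$ with $x,y\in K$ if and only if $tu\in K-K=2L$, i.e.\ if and only if $\tfrac t2 u\in L$. Hence
\[
\ell_K(u)=\sup\{t\ge0: tu\in K-K\}=\sup\{t\ge 0: \tfrac t2 u\in L\}=2\rho_L(u).
\]
This is purely definitional. The only point to check is that the set $\{t\ge0: tu\in K-K\}$ is an interval containing $0$, which holds since $0\in K-K$ and $K-K$ is convex.

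\emph{Step 2: $\ell_L(u)=2\rho_L(u)$.} Apply Step~1 with $K$ replaced by $L$; this is legitimate since $L$ is itself a convex body. It gives $\ell_L(u)=2\rho_{L'}(u)$, where $L'=\tfrac12(L-L)$. Because $L$ is centrally symmetric, $-L=L$, and by convexity $\tfrac12(L+L)=L$. Therefore $L'=L$, and so $\ell_L(u)=2\rho_L(u)$.

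Combining the two steps gives $\ell_K(u)=\ell_L(u)=2\rho_L(u)$.

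There is no real obstacle here. The only mild care needed is the identity $\tfrac12(L+L)=L$, which uses convexity, together with the symmetry $-L=L$. One should also note that suprema versus maxima cause no issue: compactness of $K-K$ ensures the supremum $\sup\{t\ge0: tu\in K-K\}$ is attained, and in any case both sides are defined as suprema over the same set after rescaling.
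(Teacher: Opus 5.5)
Your proof is correct and follows the paper's argument. Step 1 is exactly the paper's observation that $tu\in L$ if and only if $2tu$ is a difference of two points of $K$. Your Step 2, which applies Step 1 to $L$ and uses $\tfrac12(L-L)=L$, simply makes explicit the paper's one-line remark that $\ell_L(u)=2\rho_L(u)$ because $L$ is centrally symmetric and convex.
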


\begin{proof}
	Observe that
	\begin{align*}
	tu\in L=\tfrac12(K-K) \quad \Longleftrightarrow \quad
	\exists  \ x,y\in K \ x-y=2tu .
	\end{align*}
By taking the supremum, $\ell_K(u)=2\rho_L(u)$.
	As $L$ is centrally symmetric and convex, $\ell_L(u) = 2\rho_L(u)$.
\end{proof}

\begin{lemma}
	\label{lem:john-radial}
	For every $u\in\Sph^{d-1}$,
	\[
	\frac{\ell_K(u)}{w_K(u)}  =
	\frac{\ell_L(u)}{w_L(u)} \ge \frac{2}{1+\sqrt d}.
	\]
\end{lemma}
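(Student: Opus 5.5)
The equality in Lemma~\ref{lem:john-radial} is already available: Lemma~\ref{lem:chord-L} gives $\ell_K(u)=\ell_L(u)$, and the first lemma gives $w_K(u)=w_L(u)$. So it remains to show, for the symmetric body $L$ with $\B\subseteq L\subseteq\sqrt d\,\B$, that
\[
\frac{\ell_L(u)}{w_L(u)}=\frac{2\rho_L(u)}{2h_L(u)}=\frac{\rho_L(u)}{h_L(u)}\ \ge\ \frac{2}{1+\sqrt d}.
\]
Write $h:=h_L(u)$ and $\rho:=\rho_L(u)$. My plan is to lower-bound $\rho$ by exhibiting an explicit point of $L$ on the ray $\R_{>0}u$. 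I will use only $\B\subseteq L$, $L\subseteq\sqrt d\,\B$, and convexity.

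\emph{First, the easy regime.} Since $\B\subseteq L$, we have $\rho\ge 1$, so $\rho/h\ge 1/h$. This already gives the bound whenever $h\le (1+\sqrt d)/2$. Hence we may assume $h>(1+\sqrt d)/2$.

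\emph{Second, a witness point.} Pick $y\in L$ with $\langle y,u\rangle=h$, and write $y=hu+pw$ with $w\perp u$ a unit vector and $p\ge 0$. Because $|y|\le\sqrt d$, we get $p\le\sqrt{d-h^2}$. For any unit vector $q=cu-sw$ with $c^2+s^2=1$ and $s>0$, the point $q$ lies in $\B\subseteq L$. Choose $\lambda=s/(p+s)$, so that the $w$-component of $\lambda y+(1-\lambda)q$ cancels. Convexity then gives
\[
\rho\ \ge\ \lambda h+(1-\lambda)c=\frac{sh+pc}{p+s}.
\]
Therefore
\[
\frac{\rho}{h}\ \ge\ \max_{c^2+s^2=1,\ s>0}\frac{s+cp/h}{p+s},
\]
and the task reduces to an elementary two-variable optimization over $h\in\bigl((1+\sqrt d)/2,\sqrt d\,\bigr]$ and $0\le p\le\sqrt{d-h^2}$.

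\emph{Third, the optimization.} This is the main obstacle. I first need to confirm that this planar convex-hull estimate (the body $\operatorname{conv}(\B\cup\{y\})$ inside $L$) is strong enough to reach the constant $2/(1+\sqrt d)$. A sanity check: the cruder choice $q=-w$ gives only $\rho/h\ge 1/(1+p)\ge 1/(1+\sqrt d)$, which loses the factor $2$. So the factor $2$ must come from tilting $q$ toward $u$ and from exploiting the constraint $h^2+p^2\le d$ together with $h>(1+\sqrt d)/2$.

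My concrete plan is as follows. Fix $h$ and take $p$ at its worst value $\sqrt{d-h^2}$; I will check that the right-hand side is decreasing in $p$. Then optimize the angle of $q$, either by the choice tangent to $\B$ from $y$ or by $c=s=1/\sqrt2$ as a first try. Finally, verify the resulting one-variable inequality in $h$, expecting the extremal case to be the threshold $h=(1+\sqrt d)/2$ where the two regimes meet. If the tangent-line estimate turns out to be insufficient, the fallback is to also use the symmetric point $-y\in L$, i.e.\ work with $\operatorname{conv}(\B\cup\{\pm y\})$, and redo the same computation.
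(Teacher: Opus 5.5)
Your reduction is the same as the paper's. You work in the plane spanned by $u$ and a point $y\in L$ with $\langle y,u\rangle=h$, and you bound $\rho_L(u)$ from below by the point where $\operatorname{conv}(\B\cup\{y\})$ meets the $u$-axis. That estimate does suffice. The gap is that your proposal does not prove the lemma: all of the quantitative content is in your third step, which is only a plan, and the heuristics guiding it are off.

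\begin{itemize}
\item The choice $c=s=1/\sqrt2$ is not enough. Take $d=4$, $L=[-1,1]^4$, $u=\bigl(\tfrac{\sqrt3}{2},\tfrac{1}{2\sqrt3},\tfrac{1}{2\sqrt3},\tfrac{1}{2\sqrt3}\bigr)$ and $y=(1,1,1,1)$. Then $h=\sqrt3$, $p=1$, and $\rho/h=2/3$ exactly, which is the target. Your bound with $c=s=1/\sqrt2$ gives only $(\sqrt3+1)/\bigl(\sqrt3(\sqrt2+1)\bigr)\approx 0.653$.
\item The same example shows the extremal case is not at the threshold $h=(1+\sqrt d)/2=3/2$, where the sharp bound is strictly above the target. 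It is an interior point, $h=\sqrt{r(r+1)/2}$ and $p=\sqrt{r(r-1)/2}$ with $r=\sqrt d$. So an endpoint check proves nothing, and you need an inequality valid for every $h$.
\end{itemize}

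The missing step is to take $q$ to be the tangency point from $y$, i.e.\ the unit vector with $\langle y,q\rangle=1$ and $s>0$. This is available because $h>1$; if $h=1$, then trivially $\rho\ge 1=h$.
\begin{itemize}
\item Set $r:=|y|\le\sqrt d$ and $k:=\sqrt{r^2-1}$. The tangency point is $c=(h+kp)/r^2$, $s=(kh-p)/r^2$.
\item Then $sh+pc=k$ and $p+s=k(h+kp)/r^2$, so $\rho\ge r^2/(h+kp)$.
\item Put $t:=p/h$, so that $r^2=h^2(1+t^2)$. This gives $\rho/h\ge(1+t^2)/(1+kt)$, and
\[
\frac{1+t^2}{1+kt}\ge\frac{2}{1+r}\iff (r+1)t^2-2kt+(r-1)\ge0\iff\bigl(\sqrt{r+1}\,t-\sqrt{r-1}\bigr)^2\ge0 .
\]
\end{itemize}
Combined with $2/(1+r)\ge 2/(1+\sqrt d)$, this finishes the proof for all $h>1$ at once. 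Neither your regime split nor the monotonicity in $p$ is needed. This is exactly the paper's computation.
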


Before proving the technical lemma, let us see how it yields the following lower bound.

\begin{theorem}
	\label{th:two-over-sqrtd}
			If the planks $P_1,\dots,P_m \subset \R^d$ cover 
			a convex body $K\subset\mathbb{R}^d$ then
	\[
	\sum_{i=1}^m \rw_K(P_i) \ge \frac{2}{\sqrt d+1}.
	\]
\end{theorem}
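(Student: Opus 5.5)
The theorem should follow directly by combining Lemma~\ref{lemma:chord} with Lemma~\ref{lem:john-radial}, taking the latter as given (the paper says it will be proved afterwards). My plan is a termwise comparison of relative widths with the chord-normalized widths that Bang's argument controls.

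\emph{Step 1: pointwise bound.} Let $u_i$ be the unit normal of $P_i$. By definition,
\[
\rw_K(P_i)=\frac{\width(P_i)}{w_K(u_i)}=\frac{\width(P_i)}{\ell_K(u_i)}\cdot\frac{\ell_K(u_i)}{w_K(u_i)}.
\]
Lemma~\ref{lem:john-radial} bounds the second factor below by $2/(1+\sqrt d)$. Hence
\[
\rw_K(P_i)\ge \frac{2}{1+\sqrt d}\cdot\frac{\width(P_i)}{\ell_K(u_i)}.
\]

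\emph{Step 2: summation.} Summing over $i$ and applying Lemma~\ref{lemma:chord} to the covering $P_1,\dots,P_m$ of $K$ gives
\[
\sum_i \rw_K(P_i)\ge \frac{2}{1+\sqrt d}\sum_i\frac{\width(P_i)}{\ell_K(u_i)}\ge\frac{2}{1+\sqrt d}.
\]

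\emph{Well-definedness and normalization.} Since $K$ has nonempty interior, $\ell_K(u)>0$ and $w_K(u)>0$ for every $u$, so all divisions are valid. We have already replaced $K$ and the $P_i$ by their images under the affine map $T$ putting $L$ in John's position. This is legitimate because both $\rw_K$ and the covering property are affine invariant, so the hypotheses of Lemma~\ref{lem:john-radial} are met.

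\emph{Main obstacle.} All the real content lies in Lemma~\ref{lem:john-radial}, which this argument assumes. Its equality part follows from Lemma~\ref{lem:chord-L} together with $w_K=w_L$. For the inequality $\rho_L(u)/h_L(u)\ge 2/(1+\sqrt d)$, the crude estimates $\rho_L\ge1$ and $h_L\le\sqrt d$ give only $1/\sqrt d$. The improvement must therefore use the contact-point structure of John's position, $\sum c_j v_j\otimes v_j=I$ with $\sum c_j=d$, to show that $\rho_L(u)$ and $h_L(u)$ cannot be simultaneously near $1$ and $\sqrt d$. I would try a two-dimensional argument in the plane spanned by $u$ and a point $x\in L$ attaining $h_L(u)$, using $\B\subseteq L$ and symmetry to produce a convex hull of $\pm x$ and the disk. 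This gives a lower bound on $\rho_L(u)$ in terms of $|x|\le\sqrt d$ and the angle between $x$ and $u$, to be optimized over that angle.
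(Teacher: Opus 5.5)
Your argument is correct and is exactly the paper's proof: factor $\rw_K(P_i)$ through $\ell_K(u_i)$, bound $\ell_K(u_i)/w_K(u_i)$ below by Lemma~\ref{lem:john-radial}, and sum using Lemma~\ref{lemma:chord}. One small correction to your side remark: the paper's proof of Lemma~\ref{lem:john-radial} does not use the contact-point decomposition. It needs only the sandwich $\B\subseteq L\subseteq\sqrt d\,\B$, combined with the planar tangent-line argument you sketch: a tangent from a point attaining $h_L(u)$ to the unit disc, then optimization over the angle.
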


\begin{proof}
	 Let $u_1,\dots, u_m$ be the unit normals of the planks. By Lemmas \ref{lem:john-radial} and \ref{lemma:chord}, 
	\[
	\sum_{i=1}^m\rw_K(P_i)
	= \sum_{i=1}^m \frac{\width(P_i)}{w_K(u_i)}
	 \ge \frac{2}{1+\sqrt d}\cdot \sum_{i=1}^m \frac{\width(P_i)}{\ell_K(u_i)}
	 \ge \frac{2}{1+\sqrt d}.
	 \qedhere 
	\]
\end{proof}

\begin{figure}
	\centering
	\includegraphics[]{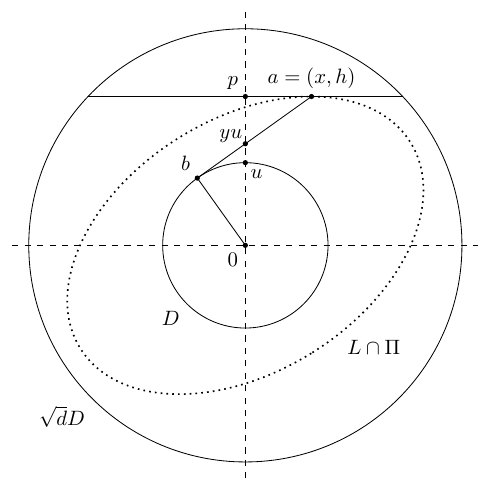}
	\caption{}
	\label{fig:circle}
\end{figure}

\begin{proof}[Proof of Lemma~\ref{lem:john-radial}]
	The equality follows from the facts $\ell_K(u)=\ell_L(u)$ and $w_K(u)=w_L(u)$ that we already established. It remains to prove the inequality.
	Set $h:=h_L(u)$. Since $\B\subseteq L\subseteq \sqrt d\,\B$, we have $1\le h\le \sqrt d$.
	Choose $a\in \partial L$ with $\langle a,u\rangle=h$, and set $\Pi$ to be the vector space spanned by $u,a$.
	If $a = t u$ for some $t \in \R$ then 
	\[\frac{\ell_L(u)}{w_L(u)} =1 \ge \frac{2}{1+\sqrt d}.\]
	So we can assume $\Pi$ is two-dimensional.
	As $\ell_L(u)=2\rho_L(u)$ and $w_L(u)=2h$, 
	it suffices to prove
	\[
	\frac{\rho_{L\cap \Pi }(u)}{h}
	\ge \frac{2}{1+\sqrt d}.
	\]
	Identify $\Pi \simeq\R^2$ and write (without loss of generality)
	\[
u=(0,1),\qquad 	a=(x,h),\qquad x,h > 0,\qquad r:=\|a\|=\sqrt{x^2+h^2}\le \sqrt d,
	\]
	see Figure~\ref{fig:circle}.
It holds that $r>h\geq 1$. 

Let $D$ be the unit disc in $\R^2$.
	Let $b\in\partial D$ be the tangency point of the segment from $a$ to $D$ such that segment $[a,b]$ is ``above $D$''; that is, the point of intersection of $[a,b]$ with the $u$-axis
is $yu$ for $y > 0$ (the fact that $y>0$ relies also on the choice of $a$).
If $y=1$, we may assume $a=u$; this case has already been considered. Hence, we   assume $y>1$.
	The segment $[a,b]$ lies in $L\cap\Pi$, so $\rho_{L\cap\Pi}(u) \geq y$. Therefore it is enough to show that
	\[
	\frac{y}{h} \geq \frac{2}{1+\sqrt d}.
	\]

Take the point $p=(0,h)$, and consider the two similar right triangles given by $0,b,yu$ and $a,p,yu$. 
Because $\|b\|=1$,
\[
x =  \frac{\|p - a\|}{\|b - 0 \|}
= \frac{\|p - yu\|}{\|b - yu\|}
=\frac{h-y}{\sqrt{y^2-1}}.
\]
Therefore,
\begin{align*}
	& x\sqrt{y^2-1}  = h-y\\
	\Longrightarrow\quad  & x^2\bigl(y^2-1\bigr) = (h-y)^2 \\
	\Longrightarrow\quad  & (x^2-1)y^2+2hy-r^2   = 0.
\end{align*} 
If $x=1$ we have $y=\frac{r^2}{2h};$ and if $x\ne 1$ then, using $y\in[1,h]$,
\begin{align*}
y& =\frac{-h + \sqrt{h^2+(x^2-1)r^2}}{x^2-1} = \frac{-h + x\sqrt{r^2-1}}{x^2-1}
\\	& =\frac{-h^2+x^2(r^2-1)}{(x^2-1)\bigl(h+x\sqrt{r^2-1}\bigr)}
=\frac{(x^2-1)r^2}{(x^2-1)\bigl(h+x\sqrt{r^2-1}\bigr)}.
\end{align*}

Consequently (for both $x = 1$ and $x \ne 1$),
\[
\frac{y}{h}
=\frac{r^2}{h\bigl(h+x\sqrt{r^2-1}\bigr)}
=\frac{r^2}{h^2+h\sqrt{r^2-1}\sqrt{r^2-h^2}}.
\]
Since $r \leq \sqrt{d}$, it suffices to show that for all $h\in[1,r]$,
\[	\frac{y}{h} \ge
	\frac{2}{1+r}.
\]

Set
\[
t := \frac{\sqrt{r^{2}-h^{2}}}{h} \quad \text{and} \quad k := \sqrt{r^{2}-1}.
\]
Then $\sqrt{r^{2}-h^{2}}=ht$ and $r^{2}=h^{2}(1+t^{2}).$
Hence,
\[
\frac{y}{h}  
= \frac{h^{2}(1+t^{2})}{h^{2}(1+kt)}
=\frac{1+t^{2}}{1+kt}.
\]
So the desired inequality is equivalent to
\begin{align*}
	\frac{1+t^{2}}{1+kt}\ge \frac{2}{1+r}
	\quad\Longleftrightarrow\quad& (1+r)(1+t^{2})-2(1+kt)\ge 0
	\\ \quad\Longleftrightarrow\quad&	(r+1)t^{2}-2kt+(r-1) \ge 0
	\\ \quad\Longleftrightarrow\quad&	\bigl(\sqrt{r+1}\,t-\sqrt{r-1}\bigr)^{2}\ge 0.
	\qedhere
\end{align*}	
	
\end{proof}

\begin{remark}
	Lemma \ref{lem:john-radial} is sharp. The lower bound is attained when $r = \sqrt{d}$ which holds for cubes, and $u$ is such that $x = \sqrt{\frac{r(r-1)}{2}}$ and $h = \sqrt{\frac{r(r+1)}{2}}$.
\end{remark}

\begin{remark}
	The lower bound in Theorem~\ref{th:two-over-sqrtd} can be sharpened. Its proof
	works verbatim with Banach--Mazur distance $d_{\mathrm{BM}}(L,\B)$ in place of $\sqrt d$. 
	Likewise, Minkowski--Radon argument works verbatim
	with the Minkowski measure of symmetry
	$m^*_K$ in place of $d$ (see \cite{toth2015measures} for definition and properties). Consequently,
	\[
	\sum_i \rw_K(P_i)
	\geq
	\frac{2}{1+\min\{d_{\mathrm{BM}}(L,\B),m^*_K\}} .
	\]
	
	In the planar case, this allows one to improve the bound
	$
	\frac{2}{1+\sqrt2} \approx 0.828
	$
	to
	approximately $0.836$.
	To upper bound $\min\{d_{\mathrm{BM}}(L,\B),m^*_K\}$,
	one can relate both quantities to Banach--Mazur distance from $L$ to a square. The main ingredient for it is Theorem~1.3 from \cite{grundbacher2026certain}.
\end{remark}

\bigskip 

\providecommand{\bysame}{\leavevmode\hbox to3em{\hrulefill}\thinspace}
\providecommand{\MR}{\relax\ifhmode\unskip\space\fi MR }
\providecommand{\MRhref}[2]{%
  \href{http://www.ams.org/mathscinet-getitem?mr=#1}{#2}
}
\providecommand{\href}[2]{#2}

\end{document}